\documentclass[lettersize,journal]{IEEEtran}
\usepackage{amsmath,amsfonts,amssymb}
\usepackage{algpseudocode}
\usepackage{algorithm}
\usepackage{array}
\usepackage{subfigure}
\usepackage{textcomp}
\usepackage{stfloats}
\usepackage{url}
\usepackage{verbatim}
\usepackage{graphicx}
\usepackage{balance}

\usepackage{cite}
\usepackage{bm}
\usepackage{color}  
\usepackage{orcidlink}
\def\BibTeX{{\rm B\kern-.05em{\sc i\kern-.025em b}\kern-.08em
    T\kern-.1667em\lower.7ex\hbox{E}\kern-.125emX}}
\usepackage{amsthm}

\begin{document}
% 声明新的块
\algblock{ParFor}{EndParFor}

% 自定义新的块
\algnewcommand\algorithmicparfor{\textbf{parfor\:}}
\algnewcommand\algorithmicpardo{\textbf{do}}
\algnewcommand\algorithmicendparfor{\textbf{end parfor}}

\algrenewtext{ParFor}[1]{\algorithmicparfor #1 \algorithmicpardo}
\algrenewtext{EndParFor}{\algorithmicendparfor}

\title{Distributed Model Predictive Control for Energy and Comfort Optimization in Large Buildings Using Piecewise Affine Approximation}

\author{Hongyi Li \orcidlink{0000-0003-3585-7365}, Jun Xu \orcidlink{0000-0002-2934-4814},~\IEEEmembership{Senior Member,~IEEE}, Jinfeng Liu  \orcidlink{0000-0001-8873-847X}
\thanks{
This work was supported  in part by the National Key Research and Development Program of China under Grant 2025YFE0101100, in part by the Science and Technology Innovation Committee of Shenzhen Municipality under Grant GXWD20231129101652001, in part by the Shenzhen Science and Technology Program under Grant SYSPG20241211173609005, and in part by Guangdong Provincial Key Laboratory of Intelligent Morphing Mechanisms and Adaptive Robotics under Grant 2023B1212010005.
(Corresponding author: Jun Xu.)
}
\thanks{
Hongyi Li and Jun Xu are with the School of Intelligence Science and Engineering, Harbin Institute of Technology, Shenzhen, 518055, China 
and with the Shenzhen Key Lab for Advanced Motion Control and Modern Automation Equipments, 
Shenzhen, 518055, China
(email: 23b904015@stu.hit.edu.cn; xujunqgy@hit.edu.cn).

Jinfeng Liu is with the Department of Chemical \& Materials Engineering, 
University of Alberta, Edmonton, Alberta, Canada, T6G 1H9 
(email: jinfeng@ualberta.ca).
}
}

\maketitle
\newtheorem{lemma}{Lemma}
\newtheorem{remark}{Remark}
\newtheorem{theorem}{Theorem}
\newtheorem{assumption}{Assumption}
\begin{abstract}
\textcolor{black}{
The control of large buildings encounters challenges in computational efficiency due to their size and nonlinear components. To address these issues, this paper proposes a Piecewise Affine (PWA)-based distributed scheme for Model Predictive Control (MPC) that optimizes energy and comfort through PWA-based quadratic programming. We utilize the Alternating Direction Method of Multipliers (ADMM) for effective decomposition and apply the PWA technique to handle the nonlinear components. To solve the resulting large-scale nonconvex problems, the paper introduces a convex ADMM algorithm that transforms the nonconvex problem into a series of smaller convex problems, significantly enhancing computational efficiency. Furthermore, we demonstrate that the convex ADMM algorithm converges to a local optimum of the original problem. A case study involving 36 zones validates the effectiveness of the proposed method. Our proposed method reduces execution time by 86\% compared to the centralized version.}
\end{abstract}

\begin{IEEEkeywords}
Distributed model predictive control, piecewise affine, distributed optimization, alternating direction method of multipliers.
\end{IEEEkeywords}

\section{Introduction}
Building energy consumption accounts for approximately 40\% of global energy use \cite{9478948}, with a significant proportion of that attributed to Heating, Ventilation, and Air Conditioning (HVAC) systems \cite{8956087}. In response to the growing energy crisis, many researchers are exploring solutions to reduce energy consumption (see \cite{9478948,8956087,7087366}), and improving the efficiency of HVAC systems holds significant potential. Energy efficiency in HVAC systems can typically be enhanced via improved building materials and design, as well as through the optimization of HVAC control strategies. However, it is expensive to retrofit building structures and materials for existing buildings, while employing advanced control methods to optimize HVAC operation offers a more cost-effective solution \cite{7087366}.

Model Predictive Control (MPC) is a promising alternative to traditional control methods in the building sector, valued for its ability to manage constraints, predict dynamics, and optimize multiple objectives \cite{8745685}. However, centralized MPC struggles with scalability and computational efficiency in large-scale systems, where growing decision variables hinder real-time operation. 
Distributed MPC enhances scalability and flexibility by decomposing complex control problems into smaller, independent subproblems, each managed by a local controller \cite{10011547}.
In this framework, the local controllers communicate to ensure overall system performance \cite{christofides2013distributed}. 
Distributed MPC has been applied across various fields, including energy systems \cite{8643528,li2024priority}, and multi-robot systems \cite{10381783,10004912,9508858}. In these domains, coordinating subsystems is essential for optimizing overall performance. By allowing local controllers to make independent decisions while maintaining system-wide consistency, distributed MPC has demonstrated its effectiveness in handling complex, interconnected environments.
A key challenge in distributed MPC is addressing the couplings between subsystems, which often require careful coordination among multiple controllers. The Alternating Direction Method of Multipliers (ADMM) is particularly effective at decomposing optimization problems with such coupling constraints, making it well-suited for distributed MPC. Several studies have investigated the integration of ADMM with distributed MPC in building control.
For instance, distributed MPC via proximal Jacobian ADMM, a variant of the traditional ADMM, is applied in \cite{7962927} for HVAC energy optimization.
A distributed MPC scheme, considering the dynamics of producers and consumers simultaneously, is investigated in \cite{9837140} based on ADMM. \cite{9733935} introduce an ADMM-based distributed stochastic MPC framework for a network of building-integrated microgrids.

In addition, ensuring a comfortable indoor environment is a critical aspect of building control. While most existing work focuses on indoor temperature alone (see \cite{8668526,7000560,9415466}), the Predicted Mean Vote (PMV) index provides a more comprehensive evaluation by incorporating indoor temperature, humidity, clothing, wind speed, and other factors. Despite its comprehensiveness, \textcolor{black}{the nonlinear component in the PMV formula} introduces a significant computational burden, making its direct application in MPC challenging. To address this, some studies work on approximating the PMV index. A linear approximation is employed in \cite{cigler2012optimization,yang2018state,yang2020model} of PMV to simplify calculations. However, this approach introduces larger deviations from true values at points far from the linearization point, potentially affecting controller performance. To further improve accuracy, recent work has explored more sophisticated models like Piecewise Affine (PWA) representations of the PMV index~\cite{li2023economic}. While such high-fidelity models can improve controller performance, their implementation often relies on centralized architectures. This centralized nature imposes considerable computational demands on large-scale building systems, thus limiting practical scalability and posing a significant challenge in the field.
To address this scalability challenge, this paper proposes a novel distributed control framework designed to efficiently apply high-fidelity PWA comfort models in large-scale buildings.

%\textcolor{black}{One of the contributions of this paper is to extend the PWA approximation technique to comfort modeling in large buildings, enhancing its applicability and performance in more complex environments.
%Furthermore, we propose a new distributed algorithm based on ADMM decomposition and PWA techniques to efficiently solve the resulting large-scale nonconvex optimization problem.}

The main contributions of this paper are as follows:
\begin{itemize}
    \item 
    We propose a PWA-based distributed MPC scheme for large building control. Unlike conventional methods, this scheme utilizes \textcolor{black}{PWA-based quadratic programming} to address comfort and energy optimization. To tackle the resulting large-scale nonconvex problem, we introduce a convex ADMM algorithm to solve it efficiently.

    \item The proposed convex ADMM algorithm provides a novel approach to solving PWA-based quadratic programming problems in large-scale nonconvex scenarios. By leveraging the ADMM decomposition and PWA structure, the algorithm achieves performance close to that of centralized methods, while only solving a sequence of small-scale convex problems. This method significantly improves computational efficiency. Additionally, we demonstrate that the convex ADMM algorithm converges to a local optimum of the original problem.

    \item  To our knowledge, the application of PWA approximation techniques to the PMV index within a distributed MPC framework has not been widely reported. This paper aims to address this gap. In particular, we apply a PWA approximation of the comfort index PMV to each zone in the PWA-based distributed scheme. These individual approximations are then integrated and serve as one of the global objectives in this distributed scheme. \textcolor{black}{Additionally, our proposed distributed technique is implemented in a scenario involving 36 zones, demonstrating its scalability and practical benefits.}

\end{itemize}

The rest of this paper is organized as follows:
Section \ref{s2} provides an overview of the framework and model details. Section \ref{s3} presents the PWA-based distributed MPC scheme and the novel convex ADMM algorithm. Section \ref{s5} includes a case study involving 36 zones to illustrate the advantages of the proposed methods. 
Section \ref{sec:discussion} discusses the advantages and limitations of the proposed approach compared with existing methods. Finally, Section \ref{s6} concludes the paper.

\section{System descriptions}

\label{s2}
\textcolor{black}{This section provides system descriptions that serve as a foundation for the subsequent discussion. We begin by outlining the general framework of distributed optimization presented in this paper. Next, we describe the prediction model utilized for distributed optimization. Finally, we introduce the \textcolor{black}{modeling of the comfort index in the MPC cost function} employed in this study.}

\begin{figure}[h]
\vskip -0.1in
	\centering
	\includegraphics[width=0.5\textwidth]{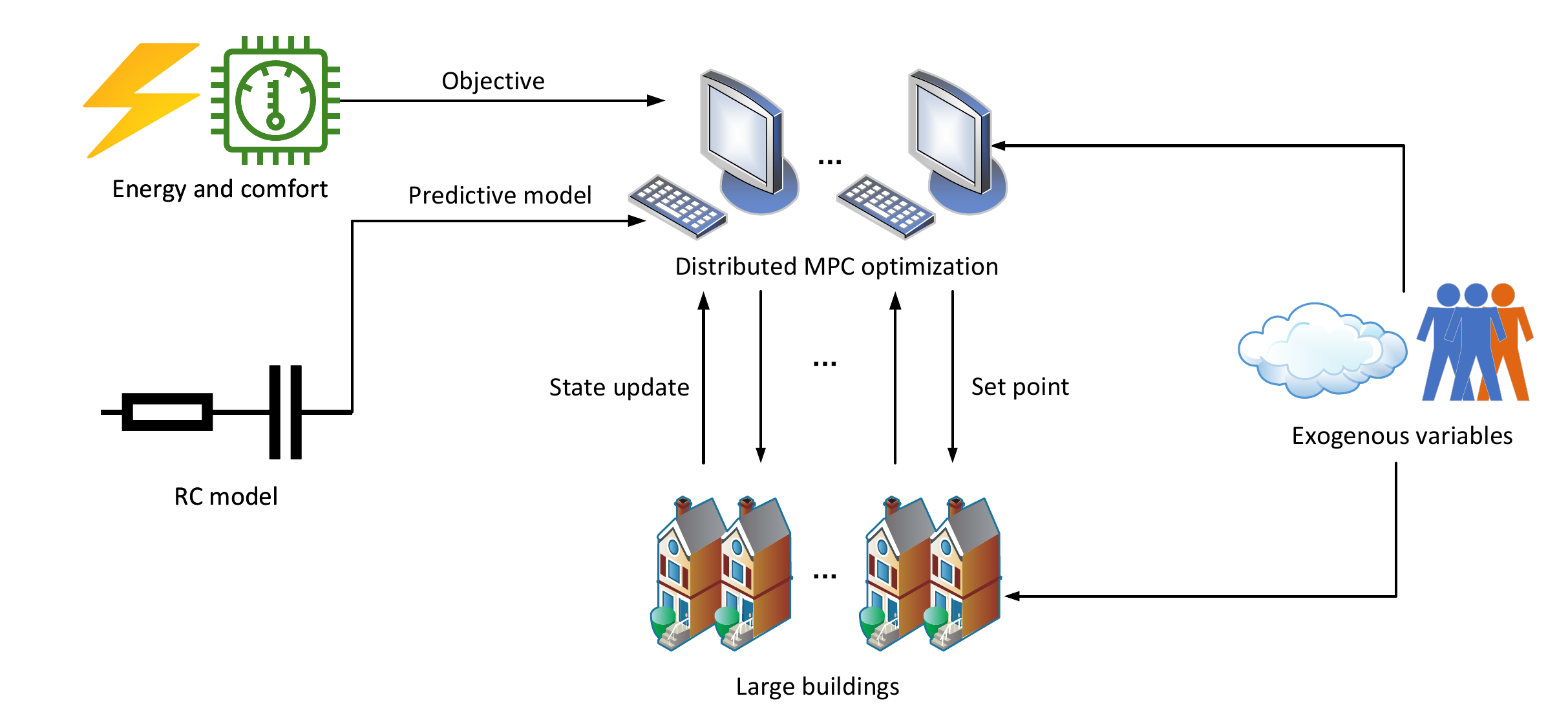} 
	\caption{\textcolor{black}{Overall framework for distributed optimization of energy and comfort in large buildings.}} 
	\label{general_framework_distributedADMM} 
    \vskip -0.2in
\end{figure}

\subsection{Overall framework}
The overall framework of this paper is illustrated in Fig. \ref{general_framework_distributedADMM}. \textcolor{black}{The control objective is to minimize energy consumption while simultaneously enhancing occupant comfort in large buildings. This is achieved by a distributed optimization scheme, in which a series of distributed MPC optimizations are designed to balance energy efficiency and occupant comfort.}
\textcolor{black}{These optimizations are performed in MATLAB to determine the set points for each zone, which are then applied to large buildings in EnergyPlus \cite{crawley2001energyplus}.} The state of the controlled zones in EnergyPlus is updated to the distributed MPC scheme in real time before each optimization step is executed. 
Additionally, the study considers external factors, such as occupancy patterns, weather conditions, and other relevant disturbances, to enhance the practical applicability of the proposed method.

\subsection{Predictive model for MPC}
The RC model, a mechanistic approach, is widely employed to simulate and predict the thermal behavior of buildings. In this model, thermal resistance simulates the transfer of heat through building materials, while heat capacity represents the building’s ability to store heat. \textcolor{black}{The RC model is formulated by constructing differential equations for indoor and wall nodes. }

In this paper, RC models are used as the predictive tool for MPC.
Table \ref{tab2} presents the notations for the quantities about RC models discussed in this paper.
\begin{table}[htbp] 
\vskip -0.1in
    \centering 
    \caption{Quantities and their notations} 
    \label{tab2} 
    \scriptsize
    \begin{tabular}{cc} 
    \hline Notation & Description \\
    \hline $u$ & HVAC system input \\ 
    $x$ & HVAC system state \\ 
    $ori$ & Represents directions: $n$ (north), $e$ (east), $w$ (west), $s$ (south) \\ 
    $C_{z}$ & Zone thermal capacity \\
    $C_{ {ori }}^{w}$ & Thermal capacity of the wall \\ 
    $R_{ori}^w$ & Thermal resistance for wall conduction \\ 
    $R_{ori}$ & Thermal resistance for convection on the inner surface \\ 
    $R_{ori}^\prime$ & Thermal resistance for convection on the outer surface \\ 
    $T_{z}$ & Indoor air temperature \\ 
    $T_{ori}^{wi}$ & Inner surface temperature of the $ori$ wall \\ $T_{ori}^{wo}$ & Outer surface temperature of the $ori$ wall \\ $T_{ori}^{out}$ & Outside temperature adjacent to the $ori$ wall  \\ 
    $\dot{Q}_{ori}^{rad}$ & Solar radiation incident on the $ori$ wall \\ 
    $\dot{Q}_{z}^{in}$ & Internal gains within the zone \\ 
    $\dot{Q}_{z}^{rad}$ & Solar radiation within the zone \\ 
    \hline 
    \end{tabular} 
\end{table}
\begin{remark}
\textcolor{black}{
It is important to note that the value of \(T_{ori}^{out}\) depends on the environment adjacent to the \(ori\) wall. If the wall faces the building's exterior, \(T_{ori}^{out}\) is the outdoor ambient temperature, \(T_{out}\). However, if the wall is shared with an adjacent zone, \(T_{ori}^{out}\) is taken as the indoor air temperature of that neighboring zone. This mechanism ensures that inter-zone thermal coupling effects are inherently captured within each zone's RC model.}
\end{remark}

The expression for the indoor node is
\begin{equation}
	\begin{aligned}
		C_{z}^{} \frac{d T_{z}^{}}{d t}=&\frac{T_{n}^{w i}-T_{z}^{}}{R_{n}}+\frac{T_{e}^{w i}-T_{z}^{}}{R_{e}}+\frac{T_{w}^{w i}-T_{z}^{}}{R_{w}}\\
		&+\frac{T_{s}^{w i}-T_{z}^{}}{R_{s}}
		+u_{}+\dot{Q}_{z}^{i n}+\dot{Q}_{z}^{rad}
	\end{aligned}.
         \label{rc1}
\end{equation}
There are two types of wall nodes: inner surface nodes and outer surface nodes. The formula for inner and outer surface nodes of the wall is given by 
\begin{equation}
	C_{ {ori }}^{w} \frac{d T_{ {ori }}^{w i}}{d t}=\frac{T_{z}^{}-T_{ {ori }}^{w i}}{R_{ {ori }}}+\frac{T_{{ori }}^{w o}-T_{ {ori }}^{w i}}{R_{ {ori }}^{w}}
 \label{rc2}
\end{equation}
\begin{equation}
	C_{ {ori }}^{w} \frac{d T_{ {ori }}^{w o}}{d t}=\frac{T_{ {ori }}^{out}-T_{ {ori }}^{w o}}{R_{ {ori }}^{\prime}}+\frac{T_{ {ori }}^{w i}-T_{ {ori }}^{w o}}{R_{ {ori }}^{w}}+\dot{Q}_{ {ori }}^{ {rad }}.
 \label{rc3}
\end{equation}

For a zone with four walls, the indoor node is described by (\ref{rc1}), while the inner and outer surfaces of the walls are modeled by (\ref{rc2}) and (\ref{rc3}), respectively. The state variables include the indoor air temperature and the surface temperatures of the four walls, i.e.,
$
x=[\begin{array}{ll}
	T_{z} & T_{n}^{wi} 
\end{array}
$
$
\begin{array}{lllllll}
	T_{e}^{wi} & T_{w}^{wi} & T_{s}^{wi}& T_{n}^{wo} & T_{e}^{wo} & T_{w}^{wo} & T_{s}^{wo}
\end{array}]^\top
$
$
\in \mathbb{R}^{9}
$.
\textcolor{black}{The input $u\in \mathrm{R}$ is the input power of HVAC systems.} The RC model for a single zone includes nine differential equations and is expressed as follows:
\begin{equation}
		{x}^+ =\bar{A} x + \bar{B} u +  \bar{d},\\
	\label{state2}
\end{equation}
where $\bar{A}$, $\bar{B}$ and $\bar{d}$ are the corresponding constant coefficients.

Thus, the RC model for the $i$-th zone in a multi-zone system can be formulated and discretized as
\begin{equation}
		{x}_i(k+1) =A_i x_i(k) + B_i u_i(k) +  d_i(k),
	\label{state_m}
\end{equation}
where $A_i$, $B_i$, and $d_i$ are the corresponding matrices after discretisation.
The R-values and C-values used in this paper are listed in Table \ref{rc}. 

\begin{table}[htbp]
\vskip -0.1in
	\centering
	\caption{R-values and C-values}
	\label{rc}
    \scriptsize
	\begin{tabular}{cccc}
		\hline
		Parameter&Value($K/W$)&Parameter&Value($J/K$) \\
		\hline
		$R_{e,w}$ & 0.0232 & $C_{z}$ & $4.8\times 10^4$\\
		$R_{n,s}$ & 0.0310 & $C_{ {n}}^{w}$ & $8.5\times 10^5$\\
		$R_{e,w}^w$ & 0.0179 & $C_{ {e}}^{w}$ & $1.1\times 10^6$\\
		$R_{n,s}^w$ & 0.0238 & $C_{ {w}}^{w}$ & $1.1\times 10^6$\\
		$R_{e,w}^\prime$ & 0.0087 & $C_{ {s}}^{w}$ & $8.5\times 10^5$\\
		 $R_{n,s}^\prime$ & 0.0116 &&\\
		\hline
	\end{tabular}
    \vskip -0.2in
\end{table}

\subsection{Comfort index}
As a popular comfort indicator, the PMV index provides a comprehensive measure of indoor occupant comfort. In this paper, the PMV Index is used as one of the objective functions in the MPC scheme to enhance indoor comfort.
The expression for the PMV index is given by \cite{cigler2012optimization},
\begin{equation}
	\begin{aligned}
		P M V &=(0.303 \cdot \exp (-0.036 \cdot M)+0.028) \cdot L \\
		L &=(M-W)-3.05  \cdot[5.733-0.007 \cdot(M-W)\\
		&-p_{a}]-0.42 \cdot[(M-W)-58.15]-0.0173 \cdot M 
		\\\
		&\cdot (5.867-p_{a}) -0.0014 \cdot M \cdot\left(34-t_{a}\right)-3.96 \cdot 10^{-8} \\
		&\cdot f_{c l} \cdot[(t_{c l}+273)^{4}
		-\left(\bar{t}_{r}+273\right)^{4}]-f_{c l} \cdot h_{c} \cdot (t_{c l}-t_{a})\\
		t_{c l} &=35.7-0.0275 \cdot(M-W)-I_{c l} \cdot\{3 . 9 6 \cdot 1 0 ^ { - 8 } \cdot f _ { c l } \\
		&\cdot [(t_{c l}
		+273)^{4}-\left(\bar{t}_{r}+273\right)^{4}]+f_{c l} \cdot h_{c} \cdot\left(t_{c l}-t_{a}\right)\}\\
		h_{c} &=\max \left(2.38\cdot\left|{t}_{{cl}}-{t}_{{a}}\right|^{0.25}, 12.1\cdot \sqrt{v_{ar}}\right)\\
		f_{c l} &= \begin{cases}1.00+1.290 \cdot I_{c l} & \text { if } \: I_{c l} \leq 0.078 \\
			1.05+0.645 \cdot I_{c l} & \text { if } \: I_{c l}>0.078\end{cases}\\
		p_{a}&=\Phi \cdot 6.1094 \exp \left(\frac{17.625 t_{a}}{t_{a}+243.04}\right) \cdot 10^{-3}.
	\end{aligned}
	\label{PMV}
\end{equation}

The symbols and notations used in the PMV expression are shown in Table \ref{tab3}, where parameter values in this paper are also given according to the ASHRAE and ISO 7730 standards \cite{international2005ergonomics,ashrae2013standard}. 
The PMV scale ranges from -3 to 3, in which higher positive values indicate warmer conditions, lower negative values indicate cooler conditions, and values near 0 suggest a more comfortable environment.
Note that in this paper, $t_a = T_{z}$, and $\bar{t}_{r}$ represents the weighted sum of the inner surface wall temperatures in each direction. 

\begin{table*}[htbp]
\vskip -0.2in
	\centering
	\caption{Parameters defining thermal comfort}
	\label{tab3}
    \scriptsize
	\begin{tabular}{cccc}
		\hline
		Symbol&Parameter&Units& Values used in the case study\\
		\hline
		$M$&Metabolic rate&$W/m^2$&60 $W/m^2$ = 1 $met$, representing low activity levels (e.g., office work, staying at home)\\
		
		$W$&Mechanical energy&$W/m^2$& Considered negligible for low activity tasks\\

        $t_{a}$&Air temperature&$^{\circ}$C&
        Equal to the zone air temperature $T_z$ in the state vector\\

        $\bar{t}_{r}$&Mean radiant temperature&$^{\circ}$C&
        Computed as a weighted sum of inner wall surface temperatures\\
		
		$v_{a r}$&Relative air speed&$m/s$&0.1 $m/s$, typical for office settings\\
		
		$I_{c l}$&Clothing thermal resistance&$m^2K/W$& Set to 0.155 $m^2 K/W$ (1 $clo$) in heating seasons, and 0.5 $clo$ in warmer months for this study\\
		
		$f_{c l}$&Clothing area factor&-& Varies according to $I_{c l}$\\
		
		$\Phi$&Relative humidity&\%& Dependent on indoor conditions\\
		
		$p_{a}$&Water vapor partial pressure&$kPa$& Determined by $\Phi$ and $t_{a}$\\
		
		$h_{c}$&Convective heat transfer coefficient&$W/m^2K$& Influenced by $t_{a}$, $t_{cl}$, and $v_{a r}$\\
		
		$t_{cl}$&Clothing surface temperature&$^{\circ}$C& Affected by $M$, $W$, $I_{c l}$, $f_{c l}$, $\bar{t}_{r}$, and $h_{c}$\\
		\hline
	\end{tabular}
    \vskip -0.1in
\end{table*}

\textcolor{black}{With the prediction model and comfort index in the cost function now established, the next section will present the resulting MPC optimization problem and its distributed solution.}

\subsection{Explicit Mapping Between the PMV Index and the System State}

\textcolor{black}{
For clarity, we explicitly state the relationship between the PMV index and the
thermal state variables of each zone. In our formulation, the air temperature in
zone~$i$ corresponds to the state component $T_{z,i}$, while the mean radiant
temperature $\bar{t}_{r,i}$ is computed as a weighted sum of the inner surface
temperatures of the surrounding walls:
\[
    \bar{t}_{r,i}=\sum_{ori\in\{n,e,w,s\}} \omega_{ori} T^{wi}_{ori,i},
    \quad \sum_{ori} \omega_{ori}=1 .
\]
Thus, the PMV index can be written explicitly as
\[
PMV_i(k) = \Psi\big(T_{z,i}(k),\{T^{wi}_{ori,i}(k)\}_{ori}\big),
\]
where $\Psi(\cdot)$ denotes the nonlinear comfort model (\ref{PMV}) or its PWA
approximation (\ref{PMV_PWA}). Therefore, the PMV index is fully determined by
the system state $x_i(k)$ and does not introduce additional independent
decision variables.}

\section{PWA-based distributed MPC scheme}
\label{s3}
\label{distributed_nonlinear}
\textcolor{black}{In this section, we first present the large-scale problem, which includes the original nonlinear formulation of the PMV index. Then, we apply the PWA technique to approximate the PMV, transforming the large-scale problem into a PWA-based quadratic programming problem. Leveraging the properties of the PWA approximation, we further reformulate the problem as a large-scale convex optimization, which is then decoupled using ADMM. Finally, we introduce a convex ADMM algorithm to efficiently solve the PWA-based quadratic programming problem.}

\subsection{Nonlinear optimization problem formulation}

In this paper, the MPC aims to minimize two objectives: comfort cost and energy consumption. The economic objective function for a single subsystem is given by,
\begin{equation}
	 J^k_i=\alpha \cdot C^{\text{PMV}}_i+C^u_i,
	\label{Jsingle}
\end{equation}
where the superscript $i$ indicates the subsystem number, and $\alpha$ is a weight parameter that can be adjusted based on the occupants' preference (focus more on energy efficiency or comfort). $C^{\text{PMV}}_i$ and $C^u_i$ are defined as follows:
\begin{equation}
	C^{\text{PMV}}_i= \sum\limits_{l=1}^{N} \delta^{k}(l) ({PMV}_{i})^2,
\end{equation}
\begin{equation}
	C^u_i=\sum\limits_{l=1}^{N} \lambda^{k}(l) (u^{k+l}_{i})^2,
\end{equation}
where $N$ is the control horizon, and
${PMV}_{i}$ represents the original nonlinear formulation of the PMV index. The variable $\delta^{k}(l)$ indicates building occupancy, with a value of 1 when occupied and 0 otherwise.
The weight $\lambda^{k}(l)$ adjusts the importance of $C^u_i$ in the optimization problem, varying according to the electricity tariff as shown in Table \ref{fee}.
When the electricity tariff is high, $\lambda^{k}(l)$
encourages energy efficiency, while a low tariff favors comfort.
The term $u^{k+l}_i$ denotes the HVAC system’s input power.
\begin{table}[h]	
\vskip -0.1in
	\centering
	\caption{Electricity tariff.}
	\label{fee}
	\scriptsize
	\begin{tabular}{cc}
		\hline
		Time&Electricity tariff $\lambda^{k}(l)$ (CNY/kWh)\\
		\hline
		0:00-8:00&0.3358\\
		8:00-14:00&0.6629\\
		14:00-17:00&1.0881\\
		17:00-19:00&0.6629\\
		19:00-22:00&1.0881\\
		22:00-24:00&0.6629\\
		\hline				
	\end{tabular}
    \vskip -0.1in
\end{table}

The global objective function is described as
\begin{equation}
	\min\: \sum\limits_{i \in \mathcal{M}}J^k_i,
	\label{Jtotal}
\end{equation}
where $\mathcal{M}=\{1, 2, \cdots, M\}$. Note that the global system considered in this paper comprises $M$ subsystems.
Then the MPC optimization problem can be described as
\begin{subequations}
	\begin{align}
\mathcal{P}_1:&\min\limits_{\mathbf{u}_1,\mathbf{u}_2,\cdots,\mathbf{u}_M}\: \sum\limits_{i \in \mathcal{M}}J^k_i=\sum\limits_{i \in \mathcal{M}}\bigg(\alpha \cdot C^{\text{PMV}}_i+C^u_i\bigg) \\
		&\quad\:\:\text { s.t.\: } \quad\quad0\leq \sum\limits_{i\in\mathcal{M}}\mathbf{u}_i \leq \mathbf{c}^{\max}\label{uuuuu}\\
		& \quad\quad\quad\quad\quad\quad \forall i \in \mathcal{M}:\nonumber \\
		& \quad\quad\quad\quad\quad\begin{aligned}[t]
			x_i(k+1)=&A_ix_i(k)+B_{i} u_i(k)+  d_i(k) 
		\end{aligned}
		\label{stateequation}\\
		&\quad\quad\quad\quad\quad x_i(k)=x^0_{i} \label{intial}\\
		&\quad\quad\quad\quad\quad\mathbf{u}^{\min }_i \leq \mathbf{u}_i \leq \mathbf{u}^{\max }_i \label{ulimit}.
	\end{align}
	\label{PMV_l0}
\end{subequations}
Here, $\mathbf{c}^{\max}$ denotes the maximum power supplied by the energy system, $x^0_{i}$ represents the initial state at the $k$-th moment, and $\mathbf{u}^{\min }_i$ and $\mathbf{u}^{\max }_i$ correspond to the minimum and maximum input power, respectively.
Note that $\mathcal{P}_1$ is a large-scale nonconvex problem.

\subsection{PWA-based optimization problem formulation}
\textcolor{black}{The inherent nonlinearity of the PMV index limits computational efficiency for solving $\mathcal{P}_1$, and even small-scale problems derived from decomposing $\mathcal{P}_1$ using distributed algorithms like ADMM continue to exhibit nonconvexity, posing challenges for optimization.}
To address this issue, the methodology begins by employing a PWA approximation of the PMV index, based on the technique presented in \cite{li2023economic}. This approach replaces the original nonlinear function with a set of affine functions, each valid within a distinct operating subregion. This step eliminates the nonlinear component in $\mathcal{P}_1$. Although the resulting optimization problem $\mathcal{P}_2$ remains nonconvex, the PWA structure is further leveraged to reformulate $\mathcal{P}_2$ as a convex problem, $\mathcal{P}_3$.

Let $\widehat{PMV}$ represent the PWA approximation of the PMV index, and (\ref{PMV}) is reduced to:
\begin{equation}
	\begin{aligned}
		&\hat{t}_{cl}=f_{\text {PWA }}\left(t_{a}, \bar{t}_{r}\right) \\
		&\hat{p}_{a}=\Phi \cdot\left(1.7833 \cdot t_{a}-12.7516\right) \cdot 10^{-3}\\
		&\widehat{PMV}=0.2551 \cdot \hat{p}_{a} + 0.0052 \cdot t_{a}\\
		&\qquad\qquad\:\:\:+0.8052 \cdot \hat{t}_{cl}-25.2883.
	\end{aligned}
	\label{PMV_PWA}
\end{equation}
\textcolor{black}{
To clarify the construction of the PWA model in \eqref{PMV_PWA}, we briefly
describe the approximation procedure used in this paper. The nonlinearity of
the PMV index is mainly driven by the air temperature $t_a$ and the mean
radiant temperature $\bar{t}_r$. Therefore, we construct a two-dimensional PWA
representation in the $(t_a,\bar{t}_r)$ space and approximate the clothing
surface temperature $t_{cl}$ by a piecewise-affine function.}

\textcolor{black}{
Specifically, we consider the typical comfort-oriented operating range for the
case study and select four linearization points,
\[
(24,24),\ (24,28),\ (28,24),\ (28,28)\ (^{\circ}\mathrm{C}),
\]
in the $(t_a,\bar{t}_r)$ plane. These points form a regular $2\times 2$ grid
and partition the operating domain into four regions. In each region, the
clothing surface temperature is approximated by a local affine function of the
form
\[
  \hat{t}_{cl} \approx a_1 t_a + a_2 \bar{t}_r + a_3,
\]
where the coefficients $a_1$, $a_2$, and $a_3$ are obtained offline such that
the resulting PWA model is continuous across the region boundaries. Substituting
this affine expression into the PMV formula (\ref{PMV}) yields, for each region,
a local affine approximation of the PMV index. The collection of these local
models gives the global PWA representation $\widehat{PMV}$ used in
\eqref{PMV_PWA}, which is well suited for optimization-based control.}

\textcolor{black}{
The approximation accuracy of the PWA model is evaluated against the full
nonlinear PMV model on a uniformly sampled grid over the considered
temperature range. Let $PMV(t_a,\bar{t}_r)$ denote the nonlinear index and
$\widehat{PMV}(t_a,\bar{t}_r)$ its PWA approximation. The mean absolute error
(MAE) is defined as
\[
  \mathrm{MAE}
  = \frac{1}{N_g}\sum_{q=1}^{N_g}
    \big| PMV(t_a^{(q)},\bar{t}_r^{(q)})
       - \widehat{PMV}(t_a^{(q)},\bar{t}_r^{(q)}) \big|.
\]
In the numerical evaluation, $t_a$ and $\bar{t}_r$ are sampled on a $20\times20$
uniform grid over $[22,30]\,^{\circ}\mathrm{C}$, i.e., $N_g=400$ grid points,
and all other parameters are fixed to the values given in Table~\ref{tab3}.
In our implementation, the resulting error is
\[
  \mathrm{MAE} = 0.0099,
\]
which is well within the commonly accepted tolerance for thermal comfort
modeling (typically within $\pm 0.1$ PMV). This indicates that the PWA
approximation is sufficiently accurate for the MPC-based comfort and energy
optimization considered in this paper.}

The comfort objective is then rewritten as:
\begin{equation}
	\hat{C}^{\text{PMV}}_i= \sum\limits_{l=1}^{N} \delta^{k}(l) (\widehat{PMV}_{i})^2,
\end{equation}
and the MPC optimization problem is reformulated as:
\begin{equation}
	\begin{aligned}		\mathcal{P}_2:&\min\limits_{\mathbf{u}_1,\mathbf{u}_2,\cdots,\mathbf{u}_M}\: &&\sum\limits_{i \in \mathcal{M}}\hat{J}^k_i=\sum\limits_{i \in \mathcal{M}}\bigg(\alpha \cdot \hat{C}^{\text{PMV}}_i+C^u_i\bigg) \\
		&\quad\:\:\text { s.t.\: }&& (\ref{uuuuu})\\
		 &&& \forall i \in \mathcal{M}:\: (\ref{stateequation})-(\ref{ulimit})	.
	\end{aligned}
	\label{PMV_l}
\end{equation}
Note that $\mathcal{P}_2$ is a nonconvex piecewise quadratic programming problem. 
\textcolor{black}{
To solve $\mathcal{P}_2$, the properties of PWA are utilized to derive an affine expression for the PMV in the active subregion. This leads to a series of convex problems, i.e., the following problem $\mathcal{P}_3$.}

Denote $\widetilde{PMV}_{i, s}$ as the affine function of $\widehat{PMV}_{i}$ after activation by the specified $\mathbf{u}_{i,s}$, i.e., for subsystem $i$, the $s$-th subregion is activated. The quadratic function $\tilde{C}^{\text{PMV}}_{i,s}$ is
\begin{equation}
	\tilde{C}^{\text{PMV}}_{i,s}= \sum\limits_{l=1}^{N} \delta^{k}(l) (\widetilde{PMV}_{i,s})^{2}.
 \label{affine_ex}
\end{equation}
Then a resulting quadratic programming problem is given by:
\begin{equation}
	\begin{aligned}		\mathcal{P}_3:&\min\limits_{\mathbf{u}_1,\mathbf{u}_2,\cdots,\mathbf{u}_M}\: &&\sum\limits_{i \in \mathcal{M}}\tilde{J}^k_{i,s}=\sum\limits_{i \in \mathcal{M}}\bigg(\alpha \cdot \tilde{C}^{\text{PMV}}_{i,s}+C^u_i\bigg) \\
		&\quad\:\:\text { s.t.\: }&& (\ref{uuuuu})\\
		 &&& \forall i \in \mathcal{M}:\: (\ref{stateequation})-(\ref{ulimit})	.
	\end{aligned}
	\label{PMV_l1}
\end{equation}
Note that $\mathcal{P}_3$ is convex.

\begin{remark}
%\textcolor{black}{\textcolor{black}%{For the} PWA approximation, the feasible domain of $\mathbf{u}_i$ is divided into several non-overlapping subregions, each associated with a distinct affine function.}
\textcolor{black}{In the optimization problem $\mathcal{P}_3$, the constraints of activated PWA subregions are not included. Yet a local optimum of $\mathcal{P}_2$ can still be achieved by solving a series of $\mathcal{P}_3$, during which different PWA subregions are traversed. See the detailed analysis in Section \ref{sec:covexadmm}.}
\end{remark}

\subsection{Effect of PWA Approximation on Comfort Performance}
\textcolor{black}{
In the proposed framework, the PWA approximation is applied only to the comfort
term associated with the PMV index, while the system dynamics
(\ref{stateequation}), the input bounds (\ref{ulimit}), and the coupling
constraint (\ref{uuuuu}) are kept exact. Therefore, the feasibility of the MPC
problem is not affected by the PWA approximation: any feasible solution for the
nonlinear formulation $\mathcal{P}_1$ remains feasible for the PWA-based
formulation $\mathcal{P}_2$.}

\textcolor{black}{
To analyze the impact on comfort performance, we denote the true PMV index by
$PMV_i$ and its PWA approximation by $\widehat{PMV}_i$. For subsystem $i$, the
comfort cost in the original formulation is
\[
  C^{\text{PMV}}_i = \sum_{l=1}^{N} \delta^{k}(l)\big(PMV_i\big)^2,
\]
while in the PWA-based formulation it becomes
\[
  \widehat{C}^{\text{PMV}}_i = \sum_{l=1}^{N} \delta^{k}(l)\big(\widehat{PMV}_i\big)^2.
\]
Define the approximation error
\[
  e_{\mathrm{PWA},i}(l) = PMV_i(l) - \widehat{PMV}_i(l).
\]
Then the difference between the two comfort costs can be written as
\begin{align*}
  C^{\text{PMV}}_i - \widehat{C}^{\text{PMV}}_i
  &= \sum_{l=1}^{N} \delta^{k}(l)\Big(PMV_i^2(l) - \widehat{PMV}_i^2(l)\Big) \\
  &= \sum_{l=1}^{N} \delta^{k}(l)\, e_{\mathrm{PWA},i}(l)\,
     \big(PMV_i(l) + \widehat{PMV}_i(l)\big).
\end{align*}
Suppose that, over the operating range of interest, both the true and
approximated PMV values are confined to a compact interval
\[
  |PMV_i(l)| \leq \bar{p}, \quad |\widehat{PMV}_i(l)| \leq \bar{p}, \quad
  \forall l,
\]
which is consistent with the comfort-oriented MPC design. If the PWA
approximation error is bounded by
\[
  |e_{\mathrm{PWA},i}(l)| \leq \varepsilon_{\max}, \quad \forall l,
\]
then the deviation between the two comfort costs satisfies
\[
  \big|C^{\text{PMV}}_i - \widehat{C}^{\text{PMV}}_i\big|
  \leq 2 N \bar{p}\,\varepsilon_{\max}.
\]
Consequently, the difference between the corresponding local objectives
$J_i^k = \alpha C^{\text{PMV}}_i + C^u_i$ and
$\widehat{J}_i^k = \alpha \widehat{C}^{\text{PMV}}_i + C^u_i$ is bounded by a
constant that is proportional to the worst-case PWA approximation error
$\varepsilon_{\max}$. This indicates that, as long as the PWA model provides a
reasonable approximation of the nonlinear PMV index in the comfort region, the degradation in comfort performance is theoretically limited.}

\subsection{ADMM iteration}
For the large-scale problem $\mathcal{P}_3$, centralized algorithms face scalability challenges as the number of subsystems increases. Distributed optimization algorithms, such as ADMM, are more suitable for addressing these large-scale problems. 

Define
$\mathcal{U}_i=\{\mathbf{u}_i|(\ref{stateequation})-(\ref{ulimit}), \forall i \in \mathcal{M}\}$ as the local constraint set.
$\mathcal{P}_3$ can be reformulated as

\begin{equation}
	\begin{aligned}
		&\min\limits_{\mathbf{u}_i\in\mathcal{U}_i,i\in\mathcal{M}}\: &&\sum\limits_{i \in \mathcal{M}}\tilde{J}^k_{i,s} \\
		&\quad\:\:\text { s.t.\: } && 0 \leq \sum\limits_{i\in\mathcal{M}}\mathbf{u}_i \leq \mathbf{c}^{\max}.
	\end{aligned}
	\label{PMV_5}
\end{equation}
We introduce an indicator function $\mathrm{I}_S$ and an auxiliary variable $\mathbf{z}$. $\mathcal{P}_3$ is then reformulated as
\begin{equation}
	\begin{aligned}
		&\min\limits_{\mathbf{u}_i\in\mathcal{U}_i,i\in\mathcal{M}} && \sum\limits_{i \in \mathcal{M}}\tilde{J}^k_{i,s}+\mathrm{I}_S(\mathbf{z}) \\
		&\quad\:\:\text { s.t. } && \sum\limits_{i\in\mathcal{M}}\mathbf{u}_i=\mathbf{z} \\
		&&& \mathrm{I}_S(\mathbf{z})= \begin{cases}0, & \mathbf{z} \in S=\left[0, \mathbf{c}^{\max }\right] \\
			\infty, & \mathbf{z} \in \text { other }.\end{cases}
	\end{aligned}
	\label{qp}
\end{equation}
The Lagrangian for problem (\ref{qp}) is
\begin{equation}
	\begin{aligned}
		& \mathbf{L}_{\rho}(\mathbf{u},\mathbf{z},\bm{\lambda})=\sum\limits_{i \in \mathcal{M}}\bigg(\alpha \cdot \tilde{C}^{\text{PMV}}_{i,s}+C^u_i\bigg)+\mathrm{I}_S(\mathbf{z})\\
		&\quad\quad\quad\quad\quad+\bm{\lambda}{ }^\top \bigg(\sum\limits_{i \in \mathcal{M}}\mathbf{u}_i-\mathbf{z}\bigg)\nonumber+\frac{\rho}{2} \left\|\sum\limits_{i \in \mathcal{M}}\mathbf{u}_i-\mathbf{z}\right\|_2^2.
	\end{aligned}
\end{equation}
The problem is now tractable, and we have the following ADMM iterations:
\begin{subequations}
	\begin{flalign}
	& \mathbf{u}^{\tau+1}_i=\arg \min _{\mathbf{u}_i \in \mathcal{U}_i}\Bigg\{\tilde{C}^{\text{PMV}}_{i,s}+C^u_i+\nonumber\\
 &\quad\quad\quad\frac{\rho}{2}\left\|\mathbf{u}_i+\sum\limits_{j\in\mathcal{M}}^{j\neq i} \mathbf{u}^\tau_j-\mathbf{z}^\tau+\frac{\bm{\lambda}^\tau}{\rho}\right\|_2^2 \Bigg\}
 \label{admmy}\\
	&\begin{aligned}[t] &\mathbf{z}^{\tau+1}=\prod\nolimits_S\bigg(\sum_{i \in \mathcal{M}} \mathbf{u}^{\tau}_i+\frac{\bm{\lambda}^\tau}{\rho}\bigg)  \end{aligned} \label{10b}\\
	& \bm{\lambda}^{\tau+1}=\bm{\lambda}^\tau+\rho\bigg(\sum_{i \in \mathcal{M}} \mathbf{u}^{\tau+1}_i-\mathbf{z}^{\tau+1}\bigg),\label{10c}
	\end{flalign}
 \label{ADMM}
\end{subequations}
where $\prod\nolimits_S$ is the Euclidean projection onto the set $S$.
\begin{remark}
 During the ADMM iterations, the updates for $\mathbf{z}$ and $\bm{\lambda}$ follow directly from the derivation above, leaving only the update of $\mathbf{u}_i$ requiring the solution of an optimization problem. As ADMM decouples the global optimization problem, the update of $\mathbf{u}_i$ depends on the respective subsystems. This allows all $M$ subsystems to update their own $\mathbf{u}_i$ in parallel.
\end{remark}

Next, we give Lemma \ref{lemma1} to show that ADMM iterations (\ref{ADMM}) converge to an optimum of the convex problem $\mathcal{P}_3$.
\begin{lemma}
    The iterates $\{\mathbf{u}^{\tau}_1,\cdots,\mathbf{u}^{\tau}_M,\mathbf{z}^{\tau}
,\bm{\lambda}^{\tau}\}$ from the ADMM algorithm (\ref{ADMM}) converge linearly to the optimal primal-dual solution of $\mathcal{P}_3$.
\label{lemma1}
\end{lemma}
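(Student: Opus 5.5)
The plan is to treat (\ref{ADMM}) as a standard two-block ADMM for the problem (\ref{qp}) and then invoke a known linear-convergence theorem for ADMM applied to convex quadratic programs with polyhedral constraints.

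The first block is $\mathbf{u}=(\mathbf{u}_1,\dots,\mathbf{u}_M)$, with objective $f(\mathbf{u})=\sum_i \tilde{J}^k_{i,s}+\sum_i \mathrm{I}_{\mathcal{U}_i}(\mathbf{u}_i)$. The second block is $\mathbf{z}$, with objective $g(\mathbf{z})=\mathrm{I}_S(\mathbf{z})$. The coupling constraint is $E\mathbf{u}-\mathbf{z}=0$, where $E=[I,\dots,I]$.

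The first step is to check the structural hypotheses. Since each $\widetilde{PMV}_{i,s}$ is affine in the state, and the state is affine in $\mathbf{u}_i$ by (\ref{stateequation})--(\ref{intial}), each $\tilde{C}^{\text{PMV}}_{i,s}$ is a convex quadratic in $\mathbf{u}_i$. The term $C^u_i$ is strongly convex because $\lambda^k(l)>0$ for every $l$ (Table \ref{fee}). Hence $f$ is a strongly convex quadratic plus the indicator of a polyhedron, and $g$ is the indicator of a box. Both are closed, proper, convex and piecewise linear-quadratic.

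The second step is to establish existence of a primal-dual solution. I will assume $\mathcal{P}_3$ is feasible. The feasible set is then a nonempty polyhedron and the objective is strongly convex, so a unique minimizer $\mathbf{u}^\star$ exists. Because all constraints are affine, Slater's condition is not needed: the linearity constraint qualification yields a multiplier $\bm{\lambda}^\star$ and a saddle point of $\mathbf{L}_\rho$.

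The third step is the linear-rate argument, which is the main obstacle. Classical ADMM theory only gives $O(1/\tau)$ convergence, and $g$ is not strongly convex, so the results of Deng and Yin, which require strong convexity or full-rank conditions on both blocks, do not apply directly.

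My first attempt will be to invoke the theory of ADMM for piecewise linear-quadratic problems. Specifically, Yang and Han (2016), or equivalently Hong and Luo's error-bound framework, show that if both functions are convex piecewise linear-quadratic, the KKT mapping is polyhedral multifunction. By Robinson's theorem it then satisfies a calm (local upper-Lipschitz) error bound around the solution set. This gives Q-linear convergence of the ADMM iterates $(\mathbf{z}^\tau,\bm{\lambda}^\tau)$ in the $\rho$-weighted norm, and R-linear convergence of $\mathbf{u}^\tau$. I will apply it with our $f$, $g$, $E$ and $-I$.

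An alternative route is available if needed. Writing ADMM as Douglas--Rachford splitting on the dual, the dual of (\ref{qp}) has one smooth, strongly convex term, coming from the quadratic part of $f$ with polyhedral constraints, and one polyhedral term. Linear convergence of Douglas--Rachford splitting for polyhedral monotone operators would then give the same conclusion.

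The final step is to make sure the convergence holds for the iteration as actually written. I note that the $\mathbf{u}$-update (\ref{admmy}) is written in Jacobi form, with each $\mathbf{u}_i$ using $\mathbf{u}_j^\tau$. Strictly speaking, this is not the joint block minimization over $\mathbf{u}$. I will handle this in one of two ways:
\begin{itemize}
\item interpret the update as the exact joint minimization, in which case the theorem applies verbatim; or
\item treat it as a proximal-Jacobian ADMM, adding a proximal term $\frac{\tau_i}{2}\|\mathbf{u}_i-\mathbf{u}_i^\tau\|^2$ with $\tau_i$ large enough (Deng et al.), for which the same error-bound argument gives linear convergence.
\end{itemize}

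Finally, I will identify the limit $(\mathbf{u}^\star,\mathbf{z}^\star,\bm{\lambda}^\star)$ as a KKT point of (\ref{qp}), and hence as the optimal primal-dual solution of $\mathcal{P}_3$.
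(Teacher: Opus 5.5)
\textbf{The gap: the iteration (\ref{ADMM}) is not a two-block ADMM.} Besides the Jacobi coupling among the $\mathbf{u}_i$, which you notice, the $\mathbf{z}$-step (\ref{10b}) uses $\sum_i\mathbf{u}_i^{\tau}$ rather than $\sum_i\mathbf{u}_i^{\tau+1}$. The paper's proof states this explicitly: both $\mathbf{u}_i^{\tau+1}$ and $\mathbf{z}^{\tau+1}$ are computed from the $\tau$-th iterates. So even if you replace the Jacobi $\mathbf{u}$-step by a joint minimization, the result is a fully parallel (Jacobian) splitting of the augmented Lagrangian method in $(\mathbf{u},\mathbf{z})$, not Gauss--Seidel ADMM. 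The Yang--Han piecewise linear-quadratic theorem therefore does not apply ``verbatim''. Both of your remedies replace (\ref{ADMM}) by a different algorithm: joint minimization, or added proximal terms, whose theory in Deng et al.\ rests on explicit conditions on the proximal weights and the dual step. Your claim that the error-bound argument then yields a linear rate is also not substantiated. As written, the proposal would prove linear convergence of some other method, not of (\ref{ADMM}).

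\textbf{This matters: without a condition on $\rho$ the iteration can fail to converge.} Take $M=N=1$, so that $\mathcal{P}_3$ minimizes a scalar quadratic $\tfrac{a}{2}u^2-cu$ with $a>0$, and let $u^\star=c/a$ lie in the interior of $\mathcal{U}_1$ and of $S$. Then the $\mathbf{u}$-step is trivially a joint minimization and $\lambda^\star=0$. Set $e_u=u-u^\star$, $e_z=z-u^\star$, $\theta=\lambda/\rho$ and $\kappa=\rho/(a+\rho)$. Near the solution the iteration (\ref{ADMM}) is exactly
\[
\begin{pmatrix} e_u^{\tau+1}\\ e_z^{\tau+1}\\ \theta^{\tau+1}\end{pmatrix}
=\begin{pmatrix}0&\kappa&-\kappa\\ 1&0&1\\ -1&\kappa&-\kappa\end{pmatrix}
\begin{pmatrix} e_u^{\tau}\\ e_z^{\tau}\\ \theta^{\tau}\end{pmatrix}.
\]
Its characteristic polynomial is $p(\mu)=\mu^3+\kappa\mu^2-3\kappa\mu+\kappa$, with $p(-1)=5\kappa-1$. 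Hence for $\rho>a/4$ there is a real eigenvalue below $-1$, and the solution is an unstable fixed point. For $M$ identical subsystems with curvature $a$, the same computation on $\sum_i u_i$ gives instability once $\rho>a/(6M-2)$.

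A correct proof for (\ref{ADMM}) therefore needs a hypothesis tying $\rho$ to the curvature of the local objectives and to $M$, or a genuine modification of the scheme. This is the missing ingredient. The paper does not attempt a two-block reduction: it identifies the Jacobi structure and appeals directly to \cite{hong2017linear}. The example shows that such an appeal also requires a step-size condition.

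Two smaller points. Hong and Luo analyze sequential block updates with a sufficiently small dual step, so their framework is not interchangeable with Yang--Han's. In your Douglas--Rachford alternative, the conjugate of the strongly convex part is smooth but not strongly convex.
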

\begin{proof}
The updates of $\mathbf{u}^{\tau+1}_i$ and $\mathbf{z}^{\tau+1}$ are based on the $\tau$-th iterates $\mathbf{u}^{\tau}_i$, $\mathbf{z}^{\tau}$, and $\bm{\lambda}^\tau$, while the updates of $\bm{\lambda}^{\tau+1}$ are based on the $(\tau+1)$-th iterates of $\mathbf{u}^{\tau+1}_i$ and $\mathbf{z}^{\tau+1}$. This implies the ADMM variables are updated in a Jacobi manner. \cite{hong2017linear} proves that the Jacobi update leads to the ADMM algorithm converging linearly to the optimal primal-dual solution for convex problems with shared constraints. Therefore, the iterates $\{\mathbf{u}^{\tau}_1, \cdots, \mathbf{u}^{\tau}_M, \mathbf{z}^{\tau}, \bm{\lambda}^{\tau}\}$ in (\ref{ADMM}) converge linearly to the optimal primal-dual solution of $\mathcal{P}_3$.
\end{proof}

Here the update of ADMM variables has a simpler formulation, i.e.,
\begin{subequations}
	\begin{flalign}
	& \mathbf{u}^{\tau+1}_i = \arg \min _{\mathbf{u}_i \in \mathcal{U}_i}\Bigg\{\tilde{C}^{\text{PMV}}_i + C^u_i + \nonumber\\
	&\quad\quad\quad \frac{\rho}{2} \left\| \mathbf{u}_i + \sum\limits_{j\in\mathcal{M}}^{ j\neq i} \mathbf{u}^\tau_j - \mathbf{z}^\tau + \bm{\theta}^{\tau} \right\|_2^2 \Bigg\}
	\\
	& \mathbf{z}^{\tau+1} = \prod\nolimits_S \left( \sum_{i \in \mathcal{M}} \mathbf{u}^{\tau}_i + \bm{\theta}^{\tau} \right)
	\\
	& \bm{\theta}^{\tau+1} = \bm{\theta}^\tau + \sum_{i \in \mathcal{M}} \mathbf{u}^{\tau+1}_i - \mathbf{z}^{\tau+1},
	\end{flalign}
\end{subequations}
if using a scaled dual variable $\bm{\theta}=\bm{\lambda}\text{/} \rho$. See \cite{boyd2011distributed} for reference.

\textcolor{black}{
The difference in solution updates in ADMM refers to the change in the solution between consecutive iterations. It is defined as:
$$r=\frac{1}{M}\sum_{i \in \mathcal{M}} \|\mathbf{u}^{\tau+1}_i-\mathbf{u}^{\tau}_i\|_1,$$ 
where $\mathbf{u}^{\tau}$ and $\mathbf{u}^{\tau+1}$ denote the values of the solution at the $\tau$-th and $\tau+1$-th iterations, respectively, and $\|\cdot\|_1$ denotes the $L_1$-norm. }

\textcolor{black}{
Figure \ref{ADMM_res} illustrates the difference in solution updates for the problem $\mathcal{P}_3$ solved by (\ref{ADMM}).
Here, the parameter values of PMV are obtained according to Table \ref{tab3}, with the control horizon set to $N=12$ and the number of subsystems set to $M=36$.
As the number of iterations increases, the curve shows a rapid convergence to zero. This indicates that the solution updates become smaller with each iteration and that the algorithm is approaching a stable solution, i.e., the optimal solution of $\mathcal{P}_3$.}
\begin{figure}[htbp]
\vskip -0.1in
	\centering
	\includegraphics[width=0.35\textwidth]{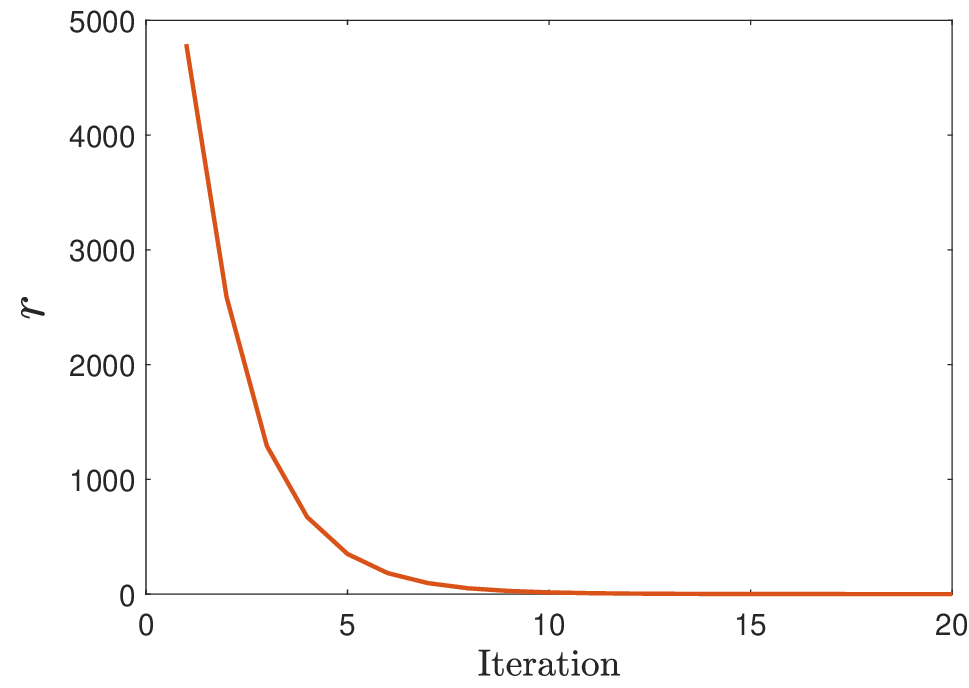} 
	\caption{Difference in solution updates.} 
	\label{ADMM_res} 
    \vskip -0.3in
\end{figure}

\subsection{Convex ADMM algorithm}\label{sec:covexadmm}
% Version A (formal, fluent)
In this section, we present a novel algorithm for the nonconvex problem \(\mathcal{P}_2\). As noted earlier, on any fixed PWA subregion, \(\mathcal{P}_2\) reduces to the convex problem \(\mathcal{P}_3\), which we solve iteratively via ADMM. By exploiting this reformulation, the algorithm converges to a local optimum of \(\mathcal{P}_2\).

To solve the nonconvex problem $\mathcal{P}_2$, we introduce Algorithm \ref{explore}, which operates through a sequence of activation subregions. Given a control sequence, an affine expression for the PMV index is derived based on the current activation subregion, and this leads to the convex problem $\mathcal{P}_3$. 
\textcolor{black}{
Since $\mathcal{P}_3$ does not include a PWA subregion constraint, there are two possibilities for the solution of this quadratic programming problem. The first is that the solution lies within the currently activated PWA subregion, which is desirable as it is locally optimal for $\mathcal{P}_2$. The second is that the solution lies outside the current PWA subregion, indicating that the subregion likely has no local optimum for $\mathcal{P}_2$. However, this situation allows the solution to escape the current subregion, activating other subregions and leading to a new $\mathcal{P}_3$, from which we can iteratively search for a local optimum for $\mathcal{P}_2$.}
\textcolor{black}{Algorithm \ref{explore} shows these two possibilities.}

\begin{algorithm}[htbp]
	\caption{Convex-ADMM for solving $\mathcal{P}_2$.}
	\label{explore}
	\begin{algorithmic}[1]
		\Require  The initial control sequences $\{\mathbf{u}^0_1, \mathbf{u}^0_2, \cdots, \mathbf{u}^0_M\}$, the initial states $\{x^0_{1}, x^0_{2}, \cdots, x^0_{M}\}$.
		\Ensure The solution $\{\mathbf{u}^{*}_1, \mathbf{u}^{*}_2, \cdots, \mathbf{u}^{*}_M\}$.
		\For {$\tau=0,1,\cdots,T-1$}
        \State \quad$\forall i \in \mathcal{M}:$
        \If {$\tau<T_{d}$}
        \State Get the new activated subregion within the control
        \Statex \quad \quad \quad horizon according to $\mathbf{u}^{\tau}_i$ and reset (\ref{affine_ex}).
        \EndIf
        \State Get $\mathbf{u}^{\tau+1}_i$ from (\ref{admmy}).
        \State Get $\mathbf{z}^{\tau+1}_i$ from (\ref{10b}).
        \State Get $\bm{\lambda}^{\tau+1}_i$ from (\ref{10c}).
        \EndFor
        \State Get $ \{\mathbf{u}_{1,s_1}, \mathbf{u}_{2,s_2}, \cdots, \mathbf{u}_{M,s_M}\}$ and the current activated subregions $\{\bm{\Omega}_{1,s_1},\bm{\Omega}_{2,s_2},\cdots,\bm{\Omega}_{M,s_M}\}$.
        \If {$\mathbf{u}_{i,s_i} \notin \text{int}(\bm{\Omega}_{i,s_i})$}
        \State Given a new initial $\mathbf{u}^0_i$ and return to Step 1.
        \EndIf
        \State $ \{\mathbf{u}^{*}_1, \mathbf{u}^{*}_2, \cdots, \mathbf{u}^{*}_M\}=  \{\mathbf{u}_{1,s_1}, \mathbf{u}_{2,s_2}, \cdots, \mathbf{u}_{M,s_M}\}$ is determined.
	\end{algorithmic}
\end{algorithm}

Next, we present a theorem to demonstrate that our algorithm can achieve a local optimum for $\mathcal{P}_2$.

\begin{assumption}
Assume for each subsystem $i$, there is a local optimum $\mathbf{u}^{*}_{i,s} $ of $\mathcal{P}_2$ such that $\mathbf{u}^{*}_{i,s} $ lies in the interior of some subregion $\bm{\Omega}_{i,s}$, i.e., $\mathbf{u}^{*}_{i,s} \in \text{int}(\bm{\Omega}_{i,s})$, $\forall i\in \mathcal{M}.$
\label{ass1}
\end{assumption}

\begin{remark}
\textcolor{black}{
Assumption 1 serves as a simplifying condition for theoretical tractability, thus constituting a limitation of our current theoretical guarantee. In practice, the true optimal solution for a PWA problem might precisely reside on a subregion boundary, where the objective function is non-differentiable. In such cases, conventional convex optimization methods, relying on an interior solution assumption, might not directly capture a boundary optimum, potentially leading the algorithm to converge to a sub-optimal solution. Nevertheless, in many practical numerical optimization scenarios, solutions rarely fall exactly on strict mathematical boundaries due to precision limits and model perturbations, which lessens the restrictiveness of Assumption 1 at a numerical level. Furthermore, our algorithm's exploratory mechanism (especially during the \(T_d\) iterations with frequent active subregion switching) also contributes to effectively exploring the solution space in practice, helping to identify promising local optima, even if a rigorous guarantee for finding a boundary optimum is not provided.}
\end{remark}

\begin{theorem}
Under Assumption~\ref{ass1}, given an initial sequence $\{\mathbf{u}^0_1, \mathbf{u}^0_2, \ldots, \mathbf{u}^0_M\}$ and initial states $\{x^0_{1}, x^0_{2}, \ldots, x^0_{M}\}$, Algorithm~\ref{explore} is guaranteed to converge to a local optimum of $\mathcal{P}_2$.
\end{theorem}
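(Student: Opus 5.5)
The argument has two parts. First, once the activated subregions are frozen, the ADMM phase of Algorithm~\ref{explore} solves the corresponding convex problem $\mathcal{P}_3$. Second, an interior optimum of that $\mathcal{P}_3$ is a local optimum of $\mathcal{P}_2$. The outer restart loop then terminates by Assumption~\ref{ass1} and the finiteness of the PWA partition.

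\textbf{Step 1 (fixed subregions).} For $\tau\ge T_d$, Step 4 is skipped, so the activated subregions $\bm{\Omega}_{i,s_i}$ stay fixed. The affine maps $\widetilde{PMV}_{i,s_i}$, and hence the objective of $\mathcal{P}_3$, do not change from that point on. The remaining iterations are therefore exactly the Jacobi ADMM (\ref{ADMM}) applied to a fixed convex $\mathcal{P}_3$, started from the iterate reached at $\tau=T_d$. Lemma~\ref{lemma1} then gives linear convergence of $\{\mathbf{u}_i^\tau,\mathbf{z}^\tau,\bm{\lambda}^\tau\}$ to an optimal primal--dual pair of this $\mathcal{P}_3$. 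Letting $T\to\infty$ (or $T$ large enough for the stated tolerance), the output $\{\mathbf{u}_{i,s_i}\}$ of Step 10 is a global minimizer of $\mathcal{P}_3$ for the activated index tuple $(s_1,\dots,s_M)$.

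\textbf{Step 2 (interior optimum of $\mathcal{P}_3$ is locally optimal for $\mathcal{P}_2$).} Suppose the check in Step 11 passes, so $\mathbf{u}_{i,s_i}\in\text{int}(\bm{\Omega}_{i,s_i})$ for every $i$. Then there is a neighborhood $\mathcal{N}$ of $\mathbf{u}^*=(\mathbf{u}_{1,s_1},\dots,\mathbf{u}_{M,s_M})$ such that $\mathbf{u}_i\in\bm{\Omega}_{i,s_i}$ for every point of $\mathcal{N}$. On $\mathcal{N}$ we have $\widehat{PMV}_i=\widetilde{PMV}_{i,s_i}$, since the PWA model is continuous and coincides with the active affine piece on its region. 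Hence $\hat J_i^k=\tilde J_{i,s_i}^k$ on $\mathcal{N}$, and the two problems share the same constraints (\ref{uuuuu}), (\ref{stateequation})--(\ref{ulimit}). So for every feasible $\mathbf{u}\in\mathcal{N}$,
\[
\sum_i \hat J_i^k(\mathbf{u}) = \sum_i \tilde J_{i,s_i}^k(\mathbf{u}) \ge \sum_i \tilde J_{i,s_i}^k(\mathbf{u}^*) = \sum_i \hat J_i^k(\mathbf{u}^*),
\]
which shows that $\mathbf{u}^*$ is a local minimizer of $\mathcal{P}_2$.

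\textbf{Step 3 (termination of restarts).} This is the main obstacle: if the check fails, the algorithm restarts, and I must show it eventually reaches the situation of Step 2. I would argue as follows.
\begin{itemize}
\item There are finitely many subregion tuples $(s_1,\dots,s_M)$, since the partition has four regions per time step and zone over a finite horizon.
\item By Assumption~\ref{ass1}, some tuple has a local optimum $\mathbf{u}^*_{i,s}\in\text{int}(\bm{\Omega}_{i,s})$. Such a point satisfies the KKT conditions of the corresponding $\mathcal{P}_3$, because locally the two problems coincide. By convexity, it is therefore a global optimum of that $\mathcal{P}_3$.
\item Hence at least one tuple yields a $\mathcal{P}_3$ whose optimum is interior. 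Once the exploration phase ($\tau<T_d$) or a restart activates such a tuple, Step 1 produces that interior point, up to uniqueness issues handled by strict convexity from the $\lambda^k(l)u^2$ term.
\item To prevent the algorithm from cycling among tuples whose $\mathcal{P}_3$ optimum is exterior, I would have the restart in Step 12 either choose the new $\mathbf{u}_i^0$ in a not-yet-visited subregion, or use the exterior optimum itself. In the latter case the $\mathcal{P}_2$ cost is non-increasing across restarts, by continuity of the PWA cost at region boundaries.
\item Either rule excludes revisiting a tuple, so finiteness forces termination at an interior optimum.
\end{itemize}
The rigor of this step hinges on how ``given a new initial $\mathbf{u}^0_i$'' is specified. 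I would state it as a no-revisit rule and note that the guarantee is one of finite termination under that rule.
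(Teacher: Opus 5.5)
Your Steps 1 and 2 are the paper's proof. The paper freezes the subregions for $\tau\ge T_d$ and uses Lemma~\ref{lemma1} to get optimality for $\mathcal{P}_3$. It then notes that the limit is also optimal for $\mathcal{P}_4$, which adds $\mathbf{u}_i\in\bm{\Omega}_{i,s_i}$, and uses interiority to conclude local optimality for $\mathcal{P}_2$. Your neighborhood argument is the same idea, stated more directly. The paper never attempts your Step 3. It tacitly assumes the interior check in Algorithm~\ref{explore} eventually passes, so what it actually shows is that whenever the algorithm returns, the output is a local optimum of $\mathcal{P}_2$.

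Step 3, where you go further, contains a false claim. Restarting at the exterior $\mathcal{P}_3$ minimizer does not make the $\mathcal{P}_2$ cost non-increasing. Continuity only controls the cost at the point where the segment from the previous iterate to that minimizer leaves $\bm{\Omega}_{i,s_i}$; beyond that point another affine piece is active. For example, take $\widehat{PMV}(u)=u-1$ on $u\le 0$ and $-1-10u$ on $0\le u\le 1$, with cost $\widehat{PMV}(u)^2+\epsilon u^2$. From $u=-1/2$ (cost about $2.25$), the active-piece minimizer is $u=1/(1+\epsilon)$, where the true cost is about $121$. Adding a third piece on $u\ge 1$ with an interior zero keeps Assumption~\ref{ass1} true. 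Restarting at the exit point instead would give non-increase, but possibly zero decrease at boundary points. In any case, a monotone cost never forbids returning to a tuple from a different starting point, so ``either rule excludes revisiting a tuple'' fails for this rule.

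The first rule also needs a fix. The tuple that gets frozen is the one activated by $\mathbf{u}^{T_d-1}$, not by $\mathbf{u}^0$, so the exploration iterations can re-freeze an old tuple. The no-revisit bookkeeping must therefore be done on frozen joint tuples, for instance by taking $T_d=1$ or by freezing an unvisited tuple directly. With that change your argument is sound. There are finitely many tuples. The optimum from Assumption~\ref{ass1} is the unique minimizer of its own $\mathcal{P}_3$, which is strictly convex because all $\lambda^k(l)>0$, and it is interior, so it passes the check. This gives a finite-termination guarantee the paper lacks, at the price of possibly enumerating up to $4^{NM}$ tuples.
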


\begin{proof}
    The subregions $\{\bm{\Omega}_{1,s_1},\bm{\Omega}_{2,s_2},\cdots,\bm{\Omega}_{M,s_M}\}$ are determined by (\ref{affine_ex}) when $\tau\geq T_d$.
    As \textcolor{black}{the number of iterations $T\rightarrow \infty$},
    $\{\mathbf{u}^{*}_1, \mathbf{u}^{*}_2, \cdots, \mathbf{u}^{*}_M\}$ is optimal for $\mathcal{P}_3$ according to Lemma \ref{lemma1}, and the solution is also optimal for the following problem, 
    \begin{equation}
	\begin{aligned}		\mathcal{P}_4:&\min\limits_{\mathbf{u}_1,\mathbf{u}_2,\cdots,\mathbf{u}_M}\: &&\sum\limits_{i \in \mathcal{M}}\tilde{J}^k_{i,s_i}=\sum\limits_{i \in \mathcal{M}}\bigg(\alpha \cdot \tilde{C}^{\text{PMV}}_{i,s_i}+C^u_i\bigg) \\
		&\quad\:\:\text { s.t.\: }&& (\ref{uuuuu})\\
		 &&& \quad \forall i \in \mathcal{M}:\: \\
   &&&(\ref{stateequation})-(\ref{ulimit}), \mathbf{u}_i \in {\bm{\Omega}}_{i,s_i} 	.
	\end{aligned}
	\label{PMV_l2}
    \end{equation}
    \textcolor{black}{Note that $\mathcal{P}_4$ includes the constraints of the activated PWA subregions and that $\mathbf{u}^{*}_i \in \text{int}(\bm{\Omega}_{i,s_i})$ holds for all $ i\in\mathcal{M}$ in Algorithm \ref{explore}}. Consequently, we conclude that $\{\mathbf{u}^{*}_1, \mathbf{u}^{*}_2, \cdots, \mathbf{u}^{*}_M\}$ is optimal for $\mathcal{P}_4$ within the neighborhood. 
    Therefore, $\{\mathbf{u}^{*}_1, \mathbf{u}^{*}_2, \cdots, \mathbf{u}^{*}_M\}$ is locally optimal for $\mathcal{P}_2$.
\end{proof}

It is important to note that although we use $\mathbf{u}_0^i$ in a specific PWA subregion to formulate the quadratic programming problem $\mathcal{P}_3$, the solution set is not restricted to this PWA subregion.  Instead, it can traverse across different PWA subregions until a local optimum of $\mathcal{P}_2$ is reached.

\subsection{Initialization Strategy and Practical Behavior of Algorithm~\ref{explore}}
\textcolor{black}{
In the closed-loop implementation, the proposed distributed MPC scheme is
solved at each sampling instant in a receding-horizon fashion. A natural and
effective choice for the initial control sequences
$\{\mathbf{u}^0_1,\mathbf{u}^0_2,\dots,\mathbf{u}^0_M\}$ at time step $k$ is to
reuse the optimal sequences obtained at the previous time step $k-1$, shifted
by one step forward and complemented with a feasible terminal input. More
specifically, let $\{\mathbf{u}^{*,k-1}_1,\dots,\mathbf{u}^{*,k-1}_M\}$ denote
the optimal solutions of the MPC problem at time $k-1$. At time $k$, the
initial sequence for subsystem $i$ is set as
\[
  \mathbf{u}^0_i =
  \big[u^{*,k-1}_i(2),\dots,u^{*,k-1}_i(N),u^{\mathrm{term}}_i\big],
  \quad \forall i\in\mathcal{M},
\]
where $u^{\mathrm{term}}_i$ is any feasible input that satisfies the local
constraints of subsystem $i$. This warm-start strategy is standard in MPC and
takes advantage of the temporal coherence of the optimal control profiles.}

\textcolor{black}{
This initialization has two implications for Algorithm~\ref{explore}. First, it
provides a feasible starting point that is typically close to the new optimum,
thereby accelerating the convergence of the ADMM iterations. Second, it
increases the practical relevance of Assumption~\ref{ass1}: since the optimal
inputs at consecutive time steps are usually close, the active PWA subregions
tend to remain unchanged or change smoothly, so that the local optimum often
lies in the interior of the same subregion over successive MPC steps.}

\textcolor{black}{
It is also worth noting that Assumption~\ref{ass1} is only required for the
theoretical convergence analysis. Algorithm~\ref{explore} itself remains valid
even when the assumption does not hold exactly. When the current iterate leaves
the activated PWA subregion, the algorithm simply updates the active subregion,
recomputes the affine expression in (\ref{affine_ex}), and continues the ADMM
iterations. }

\section{Case study}
\label{s5}
\subsection{Large-scale setup}
The case study in this paper is based on \emph{summer} weather conditions in \emph{Beijing}, China.
The large-scale system under consideration is shown in Fig. \ref{Untitled2}. It comprises 36 zones within a 9-floor building, where each floor has an identical layout. The $x$-th floor is divided into four zones, labeled $x$01, $x$02, $x$03, and $x$04, where $x$ is an integer ranging from 1 to 9. 
Each zone is of equal area and equipped with an ideal variable air volume terminal. This setup allows the supply airflow to be adjusted from zero to the maximum to meet the zone's heating or cooling demand. 
Key zone parameters are detailed in Table \ref{tab1}.
Note that the occupied hours are from 10:00 to 20:00.
The test is performed on a computing device equipped with an AMD Ryzen 7 5800H processor, operating at 3.20 GHz.
\begin{figure}[htbp]
\vskip -0.1in
	\centering
	\includegraphics[width=0.16\textwidth]{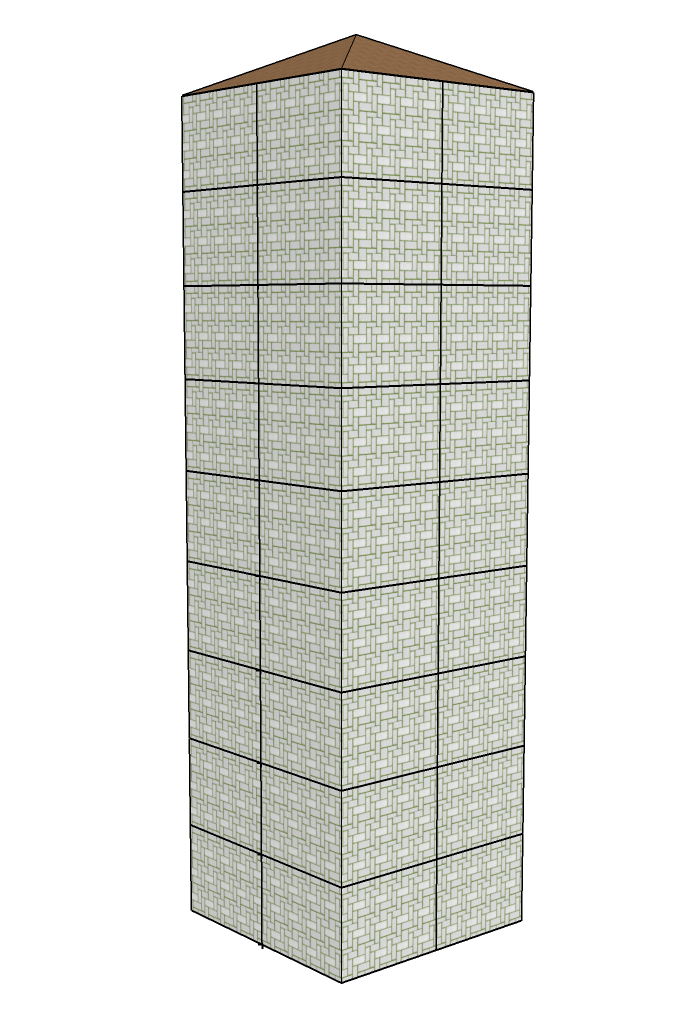} 
	\caption{ The considered building.} 
	\label{Untitled2} 
    \vskip -0.2in
\end{figure}

\subsection{Test results}
The control horizon for all MPC tests is set to $N=12$, and the ADMM parameter $\rho$ is set to 0.1.
\textcolor{black}{
The physical time interval corresponding to each control step is 15 minutes.}
We first present the test results for the 36-zone system over a 24-hour period. Fig. \ref{heatmap} illustrates the temperature heatmap for 36 zones. The trend of temperature variations in these 36 zones is consistent over a 24-hour period, and the indoor temperature remains within a comfortable range throughout the occupancy period. Due to the predictive capability of the MPC, the temperature begins to drop before 10:00, ensuring comfort by the start of occupancy. In addition, the indoor temperature shows a trend of low-high-low-high during the occupancy period, which corresponds to fluctuations in electricity tariffs. It indicates that when the electricity tariff is high, the controller adjusts by sacrificing some comfort to enhance cost efficiency.
\begin{table}[t]
\vskip -0.1in
	\centering
	\caption{Key zone parameters}
	\label{tab1}
    \scriptsize
	\begin{tabular}{cc}
		\hline
		Parameters&Preferences \\
		\hline
        Zone area& 16m$^2$ \\
		Occupant&1 occupant/12m$^2$ \\
		Lighting&0.75watts/m$^2$ \\
		Equipment&0.4watts/m$^2$ \\
		Occupied hours&10:00-20:00 \\
		\hline
	\end{tabular}
    \vskip -0.2in
\end{table}
\begin{figure*}[htbp]
  \vskip -0.1in  
	\centering
	\includegraphics[width=0.8\textwidth]{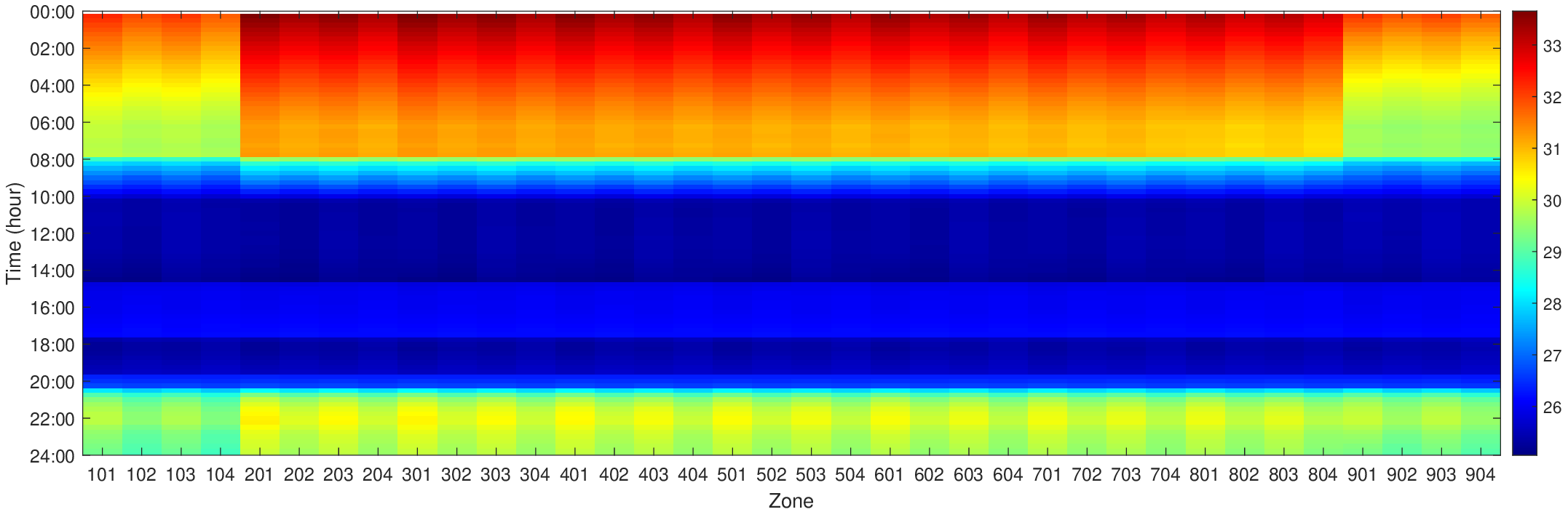} 
	\caption{Indoor temperature heatmap for 36 zones over 24 hours. Note that the occupied hours are from 10:00 to 20:00.} 
	\label{heatmap} 
  \vskip -0.1in  
\end{figure*}

Fig. \ref{PMV_distributed} illustrates the change in PMV for 36 zones. Throughout the occupancy period, the PMV values are closer to 0, indicating a higher level of comfort in the zone. An increase in PMV is observed during peak tariff periods, similar to the trend in indoor temperature. Additionally, Fig. \ref{Power_distributed} illustrates the power variations in each zone over a 24-hour period. The trade-off between energy and comfort is clearly illustrated by combining Figs. \ref{PMV_distributed} and \ref{Power_distributed}. During the 10:00-20:00 occupancy period, the demand for indoor comfort results in increased energy consumption, as reflected by the decreasing PMV curves in Fig. \ref{PMV_distributed} and the rising power curves in Fig. \ref{Power_distributed}. Outside of these hours, the PMV curve rises while the power curve approaches zero due to no comfort requirement.

\begin{figure}[htbp]
	\centering
	\includegraphics[width=0.48\textwidth]{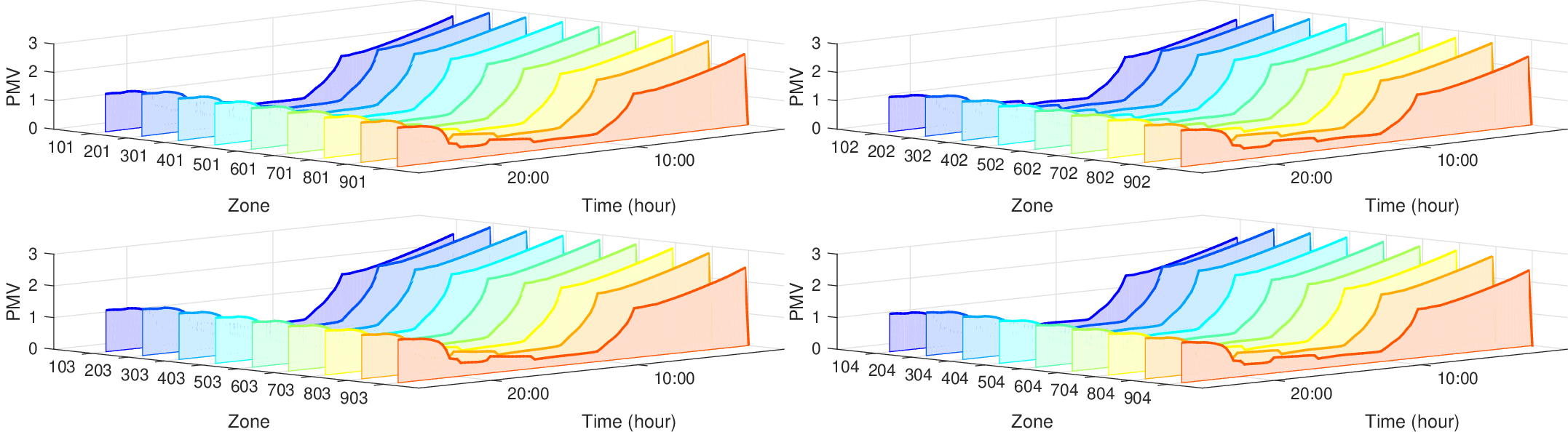} 
	\caption{36 zones PMV.} 
	\label{PMV_distributed} 
    \vskip -0.1in
\end{figure}

\begin{figure}[htbp]
	\centering
	\includegraphics[width=0.48\textwidth]{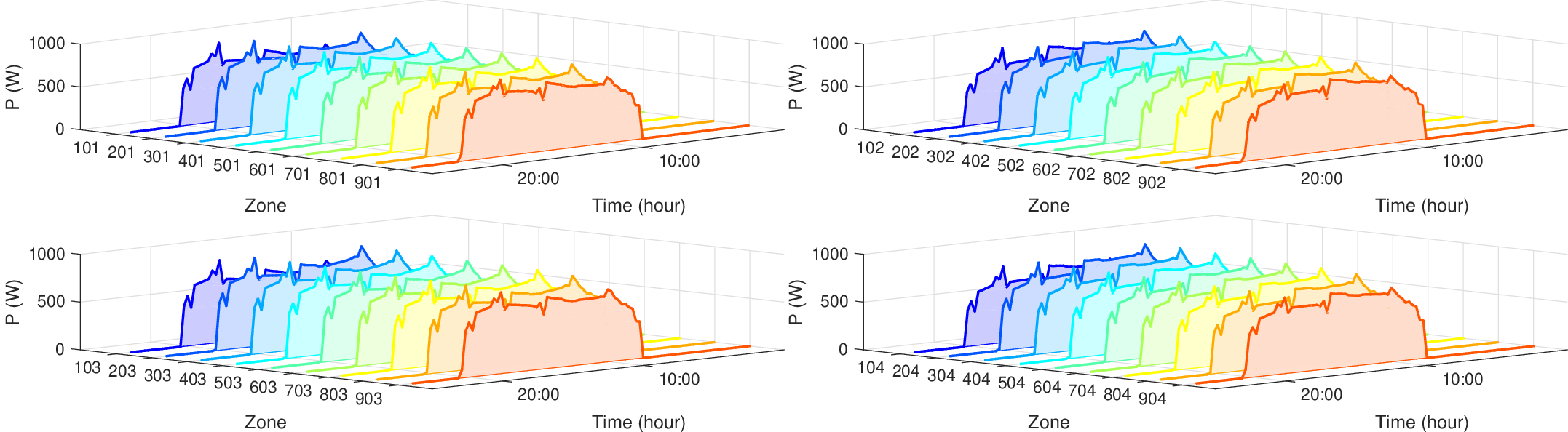} 
	\caption{36 zones energy rate.} 
	\label{Power_distributed} 
    \vskip -0.1in
\end{figure}

\subsection{Comparison methods}
The methods used for comparison are outlined below: 
\subsubsection{Centralized linear}\textcolor{black}{The centralized nonlinear approach loses its comparative value due to its highly time-consuming nature, making it difficult to track or record. As an alternative, the centralized linear method constructs the centralized MPC scheme using a linear model of the PMV index. }
\subsubsection{Centralized PWA}This approach employs the PWA model of the PMV index. Leveraging the characteristics of PWA,  the centralized MPC scheme is built based on a linear model at the relevant operating point during optimization. 
\subsubsection{Distributed nonlinear}

As with (\ref{ADMM}), we introduce an indicator function $\mathrm{I}_S$ and an auxiliary variable $\mathbf{z}$ to derive the ADMM iterations for $\mathcal{P}_1$. The resulting ADMM iterations are as follows:
\begin{subequations}
	\begin{flalign}
	& \mathbf{u}^{\tau+1}_i=\arg \min _{\mathbf{u}_i \in \mathcal{U}_i}\Bigg\{C^{\text{PMV}}_i+C^u_i+\nonumber\\
 &\quad\quad\quad\frac{\rho}{2}\left\|\mathbf{u}_i+\sum\limits_{j\in\mathcal{M}}^{j\neq i} \mathbf{u}^\tau_j-\mathbf{z}^\tau+\frac{\bm{\lambda}^\tau}{\rho}\right\|_2^2 \Bigg\}
 \label{admmy1}\\
	&\mathbf{z}^{\tau+1}=\prod\nolimits_S\bigg(\sum_{i \in \mathcal{M}} \mathbf{u}^{\tau}_i+\frac{\bm{\lambda}^\tau}{\rho}\bigg) \label{10bb}\\
	& \bm{\lambda}^{\tau+1}=\bm{\lambda}^\tau+\rho\bigg(\sum_{i \in \mathcal{M}} \mathbf{u}^{\tau+1}_i-\mathbf{z}^{\tau+1}\bigg).\label{10cc}
	\end{flalign}
 \label{ADMM_nonconvex}
\end{subequations}

 $\mathcal{P}_1$ can be solved directly using (\ref{ADMM_nonconvex}), which requires solving a series of small-scale nonconvex problems. Note that (\ref{ADMM_nonconvex}) employs the original nonlinear formulation of the PMV index without any approximations.

\subsection{Comparison results}
In this part, we compare our method with several approaches to highlight its benefits. 

\subsubsection{Energy cost}
Fig. \ref{e_distributed} illustrates the average power consumption in each zone over a 24-hour period. Table \ref{energy2} summarizes the average power for different methods during the same period. 
\textcolor{black}{
In the centralized method, the PWA approximation is more accurate, resulting in better performance compared with the linear method. In the distributed scheme, the PWA method outperforms the nonlinear scheme due to the solver's limited ability to handle nonlinear problems, while the PWA approximation effectively captures the nonlinear components.
The distributed PWA method ranks second among all methods, only behind its centralized counterpart, because the distributed method tends to yield suboptimal solutions compared to the centralized method. However, as shown in Fig. \ref{e_distributed} and Table \ref{energy2}, the performance gap between the two methods is relatively small, indicating that the distributed PWA method can achieve performance comparable to the centralized method.}

\begin{figure*}[htbp]
\vskip -0.1in
	\centering
	\includegraphics[width=0.8\textwidth]{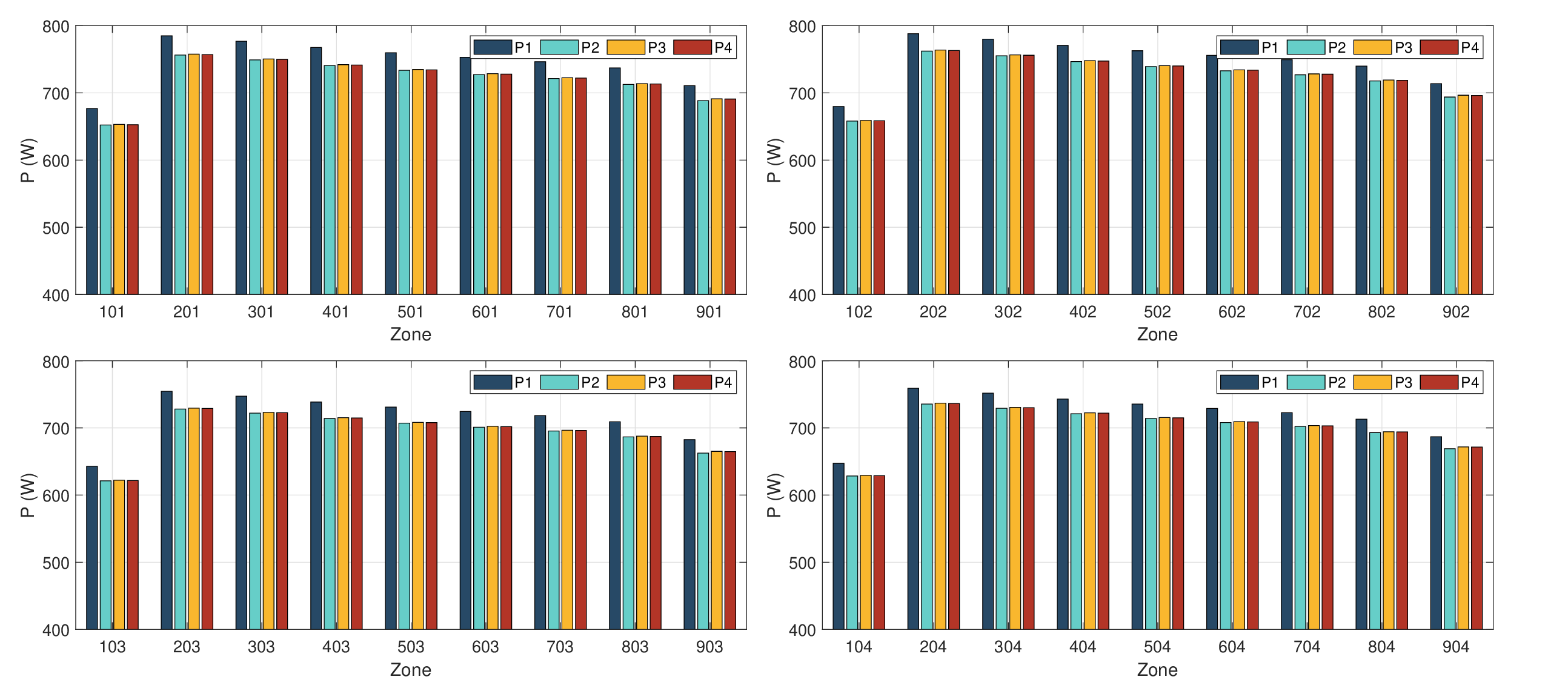} 
	\caption{Energy cost comparison in each zone (P1: Centralized linear, P2: Centralized PWA, P3: Distributed nonlinear, P4: Distributed PWA).} 
	\label{e_distributed} 
    \vskip -0.1in
\end{figure*}

\begin{table}[htbp]
\vskip -0.1in
	\centering
	\caption{Energy Cost Comparison for different methods (unit: $W$)}
	\label{energy2}
	\scriptsize
	\begin{tabular}{cccc}
		\hline %????????
		Centralized linear&Centralized PWA&Distributed nonlinear&Distributed PWA\\
		\hline  %???????
		733.05&709.82&711.23&710.78\\
		\hline%????????				
	\end{tabular}\\
    \vskip -0.1in
\end{table}

\subsubsection{Computational cost}
Table \ref{time3} compares the computational time required by various methods to complete 24-hour closed-loop control. \textcolor{black}{Note that the centralized PWA method approximates the PMV using the PWA technique but employs the PMV affine expression at the current operating point to solve the resulting MPC problem. Thus, the centralized PWA method only needs to solve large-scale convex problems at each operating point. As shown in Table \ref{time3}, both centralized methods solve large-scale convex problems and exhibit similar computational times.} In contrast, both distributed methods significantly improve computational efficiency. The nonlinear distributed method decomposes the resulting large-scale nonlinear problem into a series of smaller nonlinear problems, which reduces the problem size and outperforms the centralized methods. More notably, the PWA distributed method decomposes the resulting large-scale nonconvex problem into a series of small-scale convex problems, leading to a remarkable gain in computational efficiency compared to all other methods. Our proposed method reduces execution time by 86\% compared to the centralized version.

\begin{table}[htbp]
\vskip -0.1in
	\centering
	\caption{Computational Cost Comparison}
	\label{time3}
     \scriptsize
	\begin{tabular}{cccc}
		\hline %????????
		Centralized linear&Centralized PWA&Distributed nonlinear&Distributed PWA\\
		\hline  %???????
		242.89s&244.45s&105.85s&33.57s\\
		\hline%????????				
	\end{tabular}\\
        \footnotesize{$*$ The longest time of sequential computation is employed for the distributed method on one PC.}\\
    \vskip -0.1in
\end{table}

\subsubsection{Comfort cost}
The distribution and variation of PMV over the 24-hour period for several methods are shown in Fig. \ref{boxplot_pmv}. The indoor comfort levels achieved by these methods are similar, and all of them ensure room comfort.
\begin{figure}[htbp]

	\centering
	\includegraphics[width=0.4\textwidth]{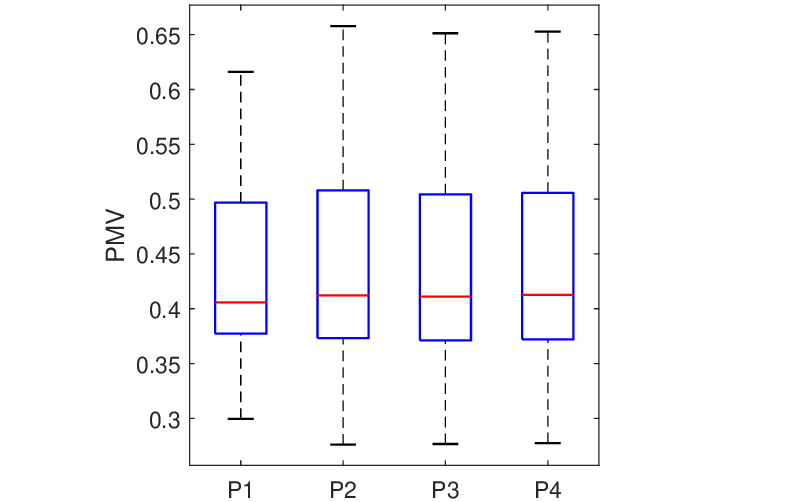} 
	\caption{Comfort Comparison (P1: Centralized linear, P2: Centralized PWA, P3: Distributed nonlinear, P4: Distributed PWA).} 
	\label{boxplot_pmv} 
    \vskip -0.15in
\end{figure}

\section{Discussion}
\label{sec:discussion} 
\textcolor{black}{
This study proposes a PWA-based distributed MPC scheme for optimizing energy consumption and occupant comfort in large buildings. The scheme leverages the decomposition capabilities of the ADMM for effective decomposition and the PWA technique for handling nonlinear components of the PMV index. This section systematically discusses the key strengths and potential limitations of the proposed method.}

\textcolor{black}{
A primary advantage of the proposed method lies in its significant improvement in computational efficiency. Traditional centralized MPC faces exponentially increasing computational complexity when dealing with large-scale building systems (our 36-zone case), especially with coupled constraints and nonlinear PMV models. Our proposed scheme substantially reduces the computational burden by decomposing the large-scale nonconvex problem into a series of smaller convex subproblems, which are then solved in parallel using ADMM. Experimental results demonstrate an 86\% reduction in execution time compared to centralized methods, which is crucial for real-time control in large buildings. Furthermore, the method provides effective handling of nonlinear comfort models. While traditional approaches often rely on linear approximations of the PMV index, sacrificing accuracy, our method preserves the nonlinear characteristics of PMV through PWA approximation, offering more precise comfort modeling. Crucially, the introduced convex ADMM algorithm efficiently handles the resulting PWA nonconvex problem, maintaining high fidelity while avoiding the challenges associated with directly solving nonconvex optimizations. Despite adopting a distributed architecture and PWA approximation, the proposed method also demonstrates high accuracy and comfort assurance. In the 36-zone case study, our method achieved similar energy efficiency to centralized approaches while ensuring high comfort levels (PMV values close to 0), validating its effectiveness in complex multi-zone systems. Lastly, the distributed architecture inherently provides strong scalability. As building sizes or the number of zones increase, our method can effectively cope by simply adding parallel computing resources, avoiding the computational bottlenecks inherent in centralized approaches.}

\textcolor{black}{
However, the proposed method also presents certain limitations worth noting. A major aspect is that its theoretical convergence relies on Assumption 1, which posits that a local optimum lies in the interior of some PWA subregion. While solutions rarely fall exactly on boundaries in many numerical practices, we acknowledge this as a theoretical limitation. If the true optimum lies strictly on a non-smooth subregion boundary, the current theoretical guarantee may not fully ensure finding that exact optimum. Although the algorithm's exploratory mechanism helps mitigate this in practice, a more rigorous treatment of boundary solutions or the pursuit of global optima would necessitate more advanced non-smooth optimization techniques. Another limitation resides in the trade-off between PWA approximation accuracy and complexity. The accuracy of PWA approximation depends on the number and division of its subregions. Increasing the number of subregions can improve approximation fidelity but also increases model complexity and the computational load for solving each subproblem. Adaptively selecting the granularity of the PWA model to achieve an optimal balance between accuracy and computational efficiency remains an ongoing challenge.}

\section{Conclusions}
\label{s6}
This paper proposes a PWA-based distributed scheme for model predictive control in large buildings, focusing on optimizing energy consumption and comfort through PWA-based quadratic programming. To implement this scheme, we design a convex ADMM algorithm that effectively addresses the resulting large-scale nonconvex problems. The convex ADMM algorithm leverages both the ADMM framework and the PWA structure to transform the original large-scale nonconvex optimization problem into a series of smaller convex optimization problems, significantly enhancing computational efficiency. The performance of the proposed distributed schemes is close to that of the centralized approach.

Future work will focus on optimizing energy distribution and comfort in building clusters, as well as considering the impact of various energy hubs on energy supply.

\bibliographystyle{ieeetr}
%\bibliographystyle{cccconf} 
%\nocite{*}
\bibliography{reference}

\end{document}